\theoremstyle{plain}
\newtheorem{theorem}{Theorem}
\numberwithin{equation}{section}
\newcommand{\mdot}{\,\begin{picture}(-1,-1)(-1,-1)\circle*{2}\end{picture}\ }
\begin{document}

\title { Homogeneous Painlev\'e II transcendents}

\date{}

\author[P.L. Robinson]{P.L. Robinson}

\address{Department of Mathematics \\ University of Florida \\ Gainesville FL 32611  USA }

\email[]{paulr@ufl.edu}

\subjclass{} \keywords{}

\begin{abstract}

We offer elementary proofs for fundamental properties of solutions to the homogeneous second Painlev\'e equation.

\end{abstract}

\maketitle

\medbreak

\section*{Introduction} 

\medbreak 

In [3] we employed essentially elementary techniques to establish fundamental properties of a distinguished solution to the first Painlev\'e equation. Here, we do the same for all solutions of the homogeneous second Painlev\'e equation that are themselves `homogeneous' in the sense that their graphs pass through the origin; again we use only elementary arguments, avoiding sophisticated techniques of asymptotic analysis. Though this equation is best viewed from a complex perspective, we shall regard it as being purely real: thus, we take the homogeneous second Painlev\'e equation to have the standard form
\begin{equation} \label{PII} 
\overset{\mdot \mdot}{s}(t) = 2 s(t)^3 + t s(t) \tag{{\bf PII}}
\end{equation}
and consider solutions $s$ that satisfy the initial condition $s(0) = 0$ and are defined on their maximal domains, these being open real intervals $I_s$ about zero that depend on the initial value $\overset{\mdot}{s}(0)$ of the derivative. Throughout, we shall refer to such functions $s$ as {\it homogeneous Painlev\'e II transcendents} or simply {\it transcendents}; we use the term transcendent as a convenience, without regard to its technical meaning. Naturally, we dismiss the case $\overset{\mdot}{s}(0) = 0$ in which $s$ is the identically zero function. Moreover, symmetry considerations permit us to focus on the cases in which $\overset{\mdot}{s}(0) > 0$; those in which this derivative is negative are covered by an overall sign change. 
In the section on `{\it Positive Time}' we discuss the finite-time blow-up of each `homogeneous' solution $s$ of \ref{PII}: we show that there exists a real number $t_{\infty} > 0$ such that $I_s \cap [0, \infty) = [0, t_{\infty})$; and we bound $t_{\infty}$ in terms of $\overset{\mdot}{s}(0)$. In the section on `{\it Negative Time}' we address the possible behaviours of $s(t)$ when $t < 0$ and note that there exists $\sigma_0 > 0$ with the following property: if $\overset{\mdot}{s}(0) > \sigma_0$ then $s$ experiences finite-time blow-up also at a negative time (which we bound); if $0 < \overset{\mdot}{s}(0) < \sigma_0$ then $s$ is bounded and indeed oscillatory (and we place temporal bounds on its oscillations). 

\medbreak 

\section*{Positive Time} 

\medbreak 

In this section, we shall fix $\sigma > 0$ and let $s = s_{\sigma}$ be the unique solution to \ref{PII} with initial data $s(0) = 0$ and $\overset{\mdot}{s}(0) = \sigma$. We take $s$ to be defined on its maximal open interval $I_s$ about zero and consider the behaviour of $s$ on $I_s \cap [0, \infty)$; as we shall see, this intersection is $[0, t_{\infty})$ for some positive real number $t_{\infty}$ which we estimate in terms of $\sigma$. 

\medbreak 

It is at once clear from the differential equation \ref{PII} that not only $s$ but each of its derivatives is initially non-negative,  its first derivative $\overset{\mdot}{s}(0)$ being $\sigma > 0$ and its fourth derivative at $0$ being $2 \sigma$. In particular, it follows that $s$ is strictly increasing in positive time and that $s(t) > \sigma t$ whenever $t > 0$ lies in $I_s$. Further, on $I_s \cap (0, \infty)$ we have $\overset{\mdot}{s} > 0$ and $\overset{\mdot \mdot}{s} > 2 s^3$ so that $(\overset{\mdot}{s}^2 - s^4)^{\mdot} = 2 \overset{\mdot}{s} \overset{\mdot \mdot}{s} - 4 s^3 \overset{\mdot}{s} > 0$ and therefore $\overset{\mdot}{s}^2 - s^4$ is strictly increasing; thus, if $0 < t \in I_s$ then 
$$\overset{\mdot}{s}(t)^2 > \sigma^2 + s(t)^4.$$

\medbreak 

Finite-time blow-up is now evident. 

\medbreak 

\begin{theorem} 
If $0 < \tau \in I_s$ then $\tau + s(\tau)^{-1} \notin I_s.$
\end{theorem}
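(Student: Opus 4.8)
The plan is to convert the differential inequality recorded just before the statement into finite-time blow-up by comparison with the model equation $\overset{\mdot}{y} = y^2$, whose solutions reach $+\infty$ in finite time. The first step is to observe that on $I_s \cap (0,\infty)$ we have $\overset{\mdot}{s} > 0$ and $s > 0$, so the displayed bound $\overset{\mdot}{s}(t)^2 > \sigma^2 + s(t)^4$ yields, upon taking positive square roots and discarding the term $\sigma^2$, the cleaner inequality $\overset{\mdot}{s}(t) > s(t)^2$ for every $t \in I_s$ with $t > 0$.

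Next I would argue by contradiction: suppose $T := \tau + s(\tau)^{-1} \in I_s$. Since $I_s$ is an interval containing both $0$ and $\tau$, and $0 < \tau < T$, the whole closed interval $[\tau, T]$ lies in $I_s$; moreover $s$ is positive and strictly increasing there, so $s \geq s(\tau) > 0$ on $[\tau, T]$ and the function $h := 1/s$ is well defined and continuously differentiable on $[\tau, T]$. From $\overset{\mdot}{s} > s^2$ we then get $\overset{\mdot}{h} = -\overset{\mdot}{s}/s^2 < -1$ throughout $[\tau, T]$.

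Integrating $\overset{\mdot}{h} < -1$ over $[\tau, T]$ gives $h(T) - h(\tau) < -(T - \tau) = -s(\tau)^{-1}$; since $h(\tau) = 1/s(\tau) = s(\tau)^{-1}$, this forces $h(T) < 0$, contradicting $h(T) = 1/s(T) > 0$. Hence $T \notin I_s$, which is the assertion.

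The argument is short and essentially mechanical once $\overset{\mdot}{s} > s^2$ is in hand; the only point deserving a moment's care — the closest thing here to an obstacle — is the remark that $[\tau, T] \subseteq I_s$, which is what guarantees that $s$ is actually defined, differentiable and strictly positive over the full range of integration. This is immediate from $I_s$ being an interval together with the monotonicity and positivity of $s$ in positive time established in the paragraphs above.
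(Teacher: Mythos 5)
Your proof is correct and is essentially the paper's own argument: both extract $\overset{\mdot}{s} > s^2$ from the displayed inequality and integrate $s^{-2}\overset{\mdot}{s} > 1$ (equivalently, $(1/s)^{\mdot} < -1$) over $[\tau, t]$. The only cosmetic difference is that you run the integration to the endpoint $T = \tau + s(\tau)^{-1}$ and derive a sign contradiction, whereas the paper keeps $t$ interior and reads off that the resulting lower bound $1/(s(\tau)^{-1} + \tau - t)$ forces blow-up before $T$.
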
 

\begin{proof} 
From the inequality displayed just prior to the theorem, the taking of square-roots yields $\overset{\mdot}{s} > s^2$ or $1 < s^{-2} \overset{\mdot}{s}$ on $I_s \cap (0, \infty)$. Consequently, if $0 < \tau < t \in I_s$ then 
$$t - \tau = \int_{\tau}^t 1 < \int_{\tau}^t s^{-2} \overset{\mdot}{s} = s(\tau)^{-1} - s(t)^{-1}$$
and therefore 
$$s(t) > 1/(s(\tau)^{-1} + \tau - t).$$
As the denominator here vanishes when $t = \tau + s(\tau)^{-1}$ we conclude that $\tau + s(\tau)^{-1}$ is not in the maximal domain $I_s$ of $s$. 
\end{proof} 

Here, we arrived at the existence of $t_{\infty} > 0$ such that $I_s \cap [0, \infty) = [0, t_{\infty})$  without reference to the value $\sigma > 0$ of $\overset{\mdot}{s}(0)$. We may place $\sigma$-dependent bounds on the blow-up time $t_{\infty}$ as follows. 

\medbreak 

First, a simple strict upper bound. 

\medbreak

\begin{theorem} 
$t_{\infty} < 2/\sqrt{\sigma}.$
\end{theorem}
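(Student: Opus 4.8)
The plan is to integrate the differential inequality $\overset{\mdot}{s}^2 > \sigma^2 + s^4$ — valid throughout $I_s \cap (0,\infty)$, as recorded just before Theorem 1 — after separating variables, and then to estimate the resulting improper integral by an elementary comparison. Since $\overset{\mdot}{s} > 0$ on $I_s \cap (0,\infty)$, taking positive square roots gives $\overset{\mdot}{s}(u)/\sqrt{\sigma^2 + s(u)^4} > 1$ there. Fix any $t \in (0, t_\infty)$; integrating over $(0,t)$ and substituting $x = s(u)$ (legitimate because $s$ is continuously differentiable and strictly increasing on $[0,t]$ with $s(0) = 0$) yields
$$t < \int_0^{s(t)} \frac{dx}{\sqrt{\sigma^2 + x^4}} < \int_0^{\infty} \frac{dx}{\sqrt{\sigma^2 + x^4}},$$
a bound independent of $t$; letting $t \uparrow t_\infty$ then gives $t_\infty \le \int_0^\infty (\sigma^2 + x^4)^{-1/2}\, dx$.

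It remains to show that this last integral is strictly less than $2/\sqrt{\sigma}$, which I would do by splitting it at $x = \sqrt{\sigma}$. On $[0, \sqrt{\sigma}]$ we have $\sigma^2 + x^4 \ge \sigma^2$, so that piece is at most $\sqrt{\sigma}/\sigma = 1/\sqrt{\sigma}$; on $[\sqrt{\sigma}, \infty)$ we have $\sigma^2 + x^4 > x^4$, hence $(\sigma^2 + x^4)^{-1/2} < x^{-2}$, so that piece is strictly less than $\int_{\sqrt{\sigma}}^\infty x^{-2}\, dx = 1/\sqrt{\sigma}$. Adding the two, $\int_0^\infty (\sigma^2 + x^4)^{-1/2}\, dx < 2/\sqrt{\sigma}$, and therefore $t_\infty < 2/\sqrt{\sigma}$.

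No step here poses a genuine obstacle; the points needing a little care are the legitimacy of the change of variables (one can sidestep it by differentiating $u \mapsto \int_0^{s(u)} (\sigma^2 + x^4)^{-1/2}\, dx$ directly), the convergence of the improper integral (which is exactly the comparison with $x^{-2}$ used above), and extracting the \emph{strict} inequality $t_\infty < 2/\sqrt{\sigma}$ rather than $t_\infty \le 2/\sqrt{\sigma}$ — this comes from the strict bound on the tail piece. An alternative, integral-free route splits $I_s \cap [0,\infty)$ at the instant where $s = \sqrt{\sigma}$: before it, $\overset{\mdot}{s} > \sigma$ forces that portion to have length below $1/\sqrt{\sigma}$ (via $s(t) > \sigma t$), and after it, $\overset{\mdot}{s} > s^2$ integrates (using $\int \overset{\mdot}{s}/s^2 = -1/s$) to show the remaining portion also has length below $1/\sqrt{\sigma}$.
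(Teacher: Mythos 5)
Your proposal is correct and follows essentially the same route as the paper: integrate the differential inequality $\overset{\mdot}{s}^2 > \sigma^2 + s^4$ to bound $t_\infty$ by $\int_0^\infty (\sigma^2+x^4)^{-1/2}\,{\rm d}x$, then split the integral into two pieces each strictly or weakly below $1/\sqrt{\sigma}$. The only cosmetic difference is that the paper first normalizes via $s = \sqrt{\sigma}\,u$ and splits at $u=1$, whereas you split directly at $x = \sqrt{\sigma}$, which amounts to the same estimate.
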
 

\begin{proof} 
Proceed as before, but taking $\sigma$ into account. If $0 < t \in I_s$ then $\overset{\mdot}{s}(t)^2 > \sigma^2 + s(t)^4$ whence 
$$t < \int_0^{s(t)} \frac{{\rm d} s}{\sqrt{\sigma^2 + s^4}}$$
and passage to the limit as $t \uparrow t_{\infty}$ yields 
$$t_{\infty} \leqslant \int_0^{\infty} \frac{{\rm d} s}{\sqrt{\sigma^2 + s^4}}$$
which the substitution $s = \sqrt{\sigma} u$ converts to 
$$t_{\infty} \leqslant \frac{1}{\sqrt{\sigma}}\int_0^{\infty} \frac{{\rm d} u}{\sqrt{1 + u^4}}.$$
We may obtain a reasonable upper bound for the value of this elliptic integral by splitting the interval of integration: 
reciprocal substitution and an elementary estimate show that 
$$\int_1^{\infty} \frac{{\rm d} u}{\sqrt{1 + u^4}} = \int_0^1 \frac{{\rm d} u}{\sqrt{1 + u^4}} < 1.$$
Numerically, it may be checked that the elliptic integral actually has value
$$\int_0^{\infty} \frac{{\rm d} u}{\sqrt{1 + u^4}} = 1.854 ...$$
\end{proof} 

\medbreak 

Agreement between this upper bound on $t_{\infty}$ and its actual value is quite close when $\sigma$ is large: if $\sigma = 100$ then $t_{\infty} = 0.18 ...$ while $2/\sqrt{\sigma} = 0.2$; if $\sigma = 4$ then $t_{\infty} = 0.91 ...$ while $2/\sqrt{\sigma} = 1$; if $\sigma = 1$ then $t_{\infty} = 1.73 ...$ while $2/\sqrt{\sigma} = 2$. Agreement is progressively worse for smaller values of $\sigma$: for example, if $\sigma = 0.0001$ then $t_{\infty} = 6.77 ...$ while $2/\sqrt{\sigma} = 200$. 

\medbreak 

Next, an admittedly poor but fairly simple strict lower bound. 

\medbreak 

\begin{theorem} 
$t_{\infty} > 2/\sqrt{\sigma^2 + 5/3}$. 
\end{theorem}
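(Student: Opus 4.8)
The plan is to carry out the mirror image of the argument that gave the \emph{upper} bound on $t_{\infty}$: there one isolated the lower estimate $\overset{\mdot}{s}^2 > \sigma^2 + s^4$ and integrated it; here I would isolate an \emph{upper} estimate for $\overset{\mdot}{s}$, valid up to any time before $t_{\infty}$, integrate it, and compare with an elementary integral. Set $T = 2/\sqrt{\sigma^2 + 5/3}$ and suppose, towards a contradiction, that $t_{\infty} \leqslant T$. Then $0 < t < T$ throughout $I_s \cap (0,\infty) = (0, t_{\infty})$, and from \ref{PII},
$$(\overset{\mdot}{s}^2 - s^4 - T s^2)^{\mdot} = 2 \overset{\mdot}{s}\,\overset{\mdot \mdot}{s} - 4 s^3 \overset{\mdot}{s} - 2 T s \overset{\mdot}{s} = 2(t - T) s \overset{\mdot}{s} < 0,$$
since $t - T < 0$ and $s, \overset{\mdot}{s} > 0$ there. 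Hence $\overset{\mdot}{s}^2 - s^4 - T s^2$ is strictly decreasing on $(0, t_{\infty})$; as it equals $\sigma^2$ at $t = 0$, it follows that $\overset{\mdot}{s}(t)^2 < \sigma^2 + s(t)^4 + T s(t)^2$ for $0 < t < t_{\infty}$.

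The next step is to dominate the right-hand side by a perfect square so as to integrate in closed form. With $\alpha = \max(\sigma, T/2)$ one has $\alpha^2 \geqslant \sigma^2$ and $2\alpha \geqslant T$, whence $\sigma^2 + T s^2 + s^4 \leqslant \alpha^2 + 2\alpha s^2 + s^4 = (\alpha + s^2)^2$, so $\overset{\mdot}{s} < \alpha + s^2$ and $\overset{\mdot}{s}/(\alpha + s^2) < 1$ on $(0, t_{\infty})$. Integrating from $0$ to $t$ and using $s(0) = 0$ gives
$$\frac{1}{\sqrt{\alpha}}\,\arctan\frac{s(t)}{\sqrt{\alpha}} < t \qquad (0 < t < t_{\infty}).$$
Now $s(t) \to \infty$ as $t \uparrow t_{\infty}$ (were $s$ bounded on $[0, t_{\infty})$ then so would be $\overset{\mdot \mdot}{s} = 2 s^3 + t s$, hence $\overset{\mdot}{s}$, contradicting maximality of $I_s$), so letting $t \uparrow t_{\infty}$ yields $\pi/(2\sqrt{\alpha}) \leqslant t_{\infty} \leqslant T$.

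To close the argument I would show this is impossible, i.e. that $\pi/(2\sqrt{\alpha}) > T$, equivalently $\alpha < \pi^2/(4 T^2) = \pi^2(\sigma^2 + 5/3)/16$. Since $\alpha$ is $\sigma$ or $T/2$, this is two routine inequalities. The first, $\sigma < \pi^2(\sigma^2 + 5/3)/16$, rearranges to $\pi^2 \sigma^2 - 16\sigma + 5\pi^2/3 > 0$, which holds for every real $\sigma$ because its discriminant $256 - 20\pi^4/3$ is negative (already $\pi^4 > 81$ forces $20\pi^4/3 > 540 > 256$). The second, $1/\sqrt{\sigma^2 + 5/3} < \pi^2(\sigma^2 + 5/3)/16$, is $\pi^4(\sigma^2 + 5/3)^3 > 256$, which follows from $(\sigma^2 + 5/3)^3 \geqslant (5/3)^3 = 125/27$ together with $\pi^4 \cdot 125/27 > 81 \cdot 125/27 = 375 > 256$. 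Thus $\alpha < \pi^2/(4 T^2)$, the inequality $\pi/(2\sqrt{\alpha}) \leqslant T$ cannot hold, and we conclude $t_{\infty} > T$.

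I expect the only real obstacle to lie in this last, quantitative step, and in particular in resisting a tempting but lossy shortcut: the cleaner estimate $\sigma^2 + T s^2 + s^4 \leqslant (\sigma^2 + 5/3)(1 + s^2)^2$ is correct, but it replaces $\alpha + s^2$ by $\sqrt{\sigma^2 + 5/3}\,(1 + s^2)$ and gives only $t_{\infty} \geqslant (\pi/2)/\sqrt{\sigma^2 + 5/3}$, which is \emph{too weak} since $\pi/2 < 2$. One is pushed to the sharper square $(\max(\sigma, T/2) + s^2)^2$, after which the two numerical inequalities above finish the job; on this route the constant $5/3$ enters merely as a convenient round value, comfortably rather than sharply large enough for both.
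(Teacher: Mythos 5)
Your argument is correct, but it takes a genuinely different route from the paper's. The paper proceeds directly: it uses $s(t) > \sigma t$ to replace $t$ by $s(t)/\sigma$ in \ref{PII}, derives the energy estimate $\overset{\mdot}{s}^2 < \sigma^2 + s^4 + 2s^3/(3\sigma)$, separates variables to obtain $t_{\infty} \geqslant \int_0^{\infty} {\rm d}s/\sqrt{s^4 + \sigma^2 + 2s^3/(3\sigma)}$, and bounds this improper integral below by splitting at $s = \sigma$, each half contributing exactly $1/\sqrt{\sigma^2 + 5/3}$ --- so the constant $5/3 = 1 + 2/3$ emerges organically from the splitting. You instead argue by contradiction: assuming $t_{\infty} \leqslant T$ lets you replace $t$ by $T$ rather than by $s/\sigma$, giving $\overset{\mdot}{s}^2 < \sigma^2 + T s^2 + s^4$, which you dominate by the perfect square $(\alpha + s^2)^2$ with $\alpha = \max(\sigma, T/2)$ and integrate in closed form to $t_{\infty} \geqslant \pi/(2\sqrt{\alpha})$; two numerical verifications (a negative discriminant and $\pi^4 (5/3)^3 > 256$) then show this exceeds $T$, closing the contradiction. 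Your route buys elementary calculus --- an arctangent in place of an elliptic-type integral estimated piecewise --- at the cost of indirectness and of the constant $5/3$ appearing as an unexplained gift that merely happens to be comfortably large enough, a point you yourself flag; the paper's route keeps the bound's provenance visible and invites the sharper integral estimates pursued in the remarks. Both rest on the same preliminaries ($s > \sigma t$, $\overset{\mdot}{s} > 0$ in positive time, and $s(t) \to \infty$ as $t \uparrow t_{\infty}$, which you rightly pause to justify), and both are complete.
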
 

\begin{proof} 
If $0 < t \in I_s$ then from \ref{PII} and $s(t) > \sigma t$ there follows
$$\overset{\mdot \mdot}{s}(t) = 2 s(t)^3 + t s(t) < 2 s(t)^3 + s(t)^2/\sigma$$
whence multiplication by $2 \overset{\mdot}{s}(t) > 0$ and integration yield 
$$\overset{\mdot}{s}(t)^2 <\sigma^2 + s(t)^4 + 2 s(t)^3/3 \sigma;$$
further integration produces
$$t > \int_0^{s(t)}  \frac{{\rm d} s}{\sqrt{s^4 + \sigma^2 + 2 s^3/3 \sigma}}$$
and passage to the limit as $t \uparrow t_{\infty}$ results in 
$$t_{\infty} \geqslant \int_0^{\infty}  \frac{{\rm d} s}{\sqrt{s^4 + \sigma^2 + 2 s^3/3 \sigma}}$$
which the substitution $s = \sigma u$ converts to 
$$t_{\infty} \geqslant \int_0^{\infty} \frac{{\rm d} u}{\sqrt{\sigma^2 u^4 + 1 + 2 u^3/3}}.$$
Again we estimate the integral by splitting the interval: if $0 < u < 1$ then 
$$\sigma^2 u^4 + 1 + 2 u^3/3 < \sigma^2 + 5/3$$
so that 
$$\int_0^1  \frac{{\rm d} u}{\sqrt{\sigma^2 u^4 + 1 + 2 u^3/3}} > \int_0^1  \frac{{\rm d} u}{\sqrt{\sigma^2 + 5/3}} = 1/\sqrt{\sigma^2 + 5 /3};$$
if $u > 1$ then 
$$\sigma^2 u^4 + 1 + 2 u^3/3 < \sigma^2 u^4 + u^4 + 2 u^4/3 = (\sigma^2 + 5/3) u^4$$
so that 
$$\int_1^{\infty}  \frac{{\rm d} u}{\sqrt{\sigma^2 u^4 + 1 + 2 u^3/3}} > \int_1^{\infty}  \frac{{\rm d} u}{\sqrt{\sigma^2 + 5/3} \; u^2} = 1/\sqrt{\sigma^2 + 5 /3}.$$
\end{proof}

\medbreak 

Of course, these estimates are rather crude and can be improved considerably. 
Incidentally, note that our upper and lower bounds conform to the expectation that the relationship between $\sigma$ and $t_{\infty}$ should be inversely proportional in some sense.  

\medbreak 

\section*{Negative Time} 

\medbreak 

In this section, we consider the behaviour of `homogeneous' solutions to \ref{PII} in negative time. For largely psychological reasons, we prefer to reverse time: thus, we shall let $s = s_{\sigma}$ be the unique solution to 
\begin{equation} \label{PII-} 
\overset{\mdot \mdot}{s}(t) = 2 s(t)^3 - t s(t) = (2 s(t)^2 - t)s(t) \tag{{\bf PII-}}
\end{equation}
such that  $s(0) = 0$ and $\overset{\mdot}{s}(0) = \sigma > 0$; we take this $s$ to be defined on its maximal open interval $J_s$ about zero and consider the behaviour of $s$ on $J_s \cap [0, \infty)$. 

\medbreak 

The concavity of $s$ is governed by the sign of $\overset{\mdot \mdot}{s}$ and so by its relation to the parabola `$2 s^2 = t$': at points `inside' this parabola (satisfying `$2 s^2 < t$') $s$ is concave down in the upper half-plane and concave up in the lower half-plane; at points `outside' the parabola (satisfying `$2 s^2 > t$') this reverses, $s$ being concave up in the upper half-plane and concave down in the lower half-plane. This immediately raises the possibility of two completely different behaviours. If $\sigma > 0$ is large enough, then the graph of $s$ rises to cross the parabola `$2 s^2 = t$' and is concave up thereafter; as we shall see, $s$ then experiences finite-time blow-up. If $\sigma > 0$ is small enough, then the graph of $s$ remains inside the parabola `$2 s^2 = t$' and $s$ is bounded; as we shall see, $s$ is then oscillatory. Experimentation suggests that these two behaviours are separated by a threshold value $\sigma_0$, finite-time blow-up occurring when $\sigma > \sigma_0$ while oscillations occur when $0 < \sigma < \sigma_0$; in fact, experimentation places $\sigma_0$ between $0.5950$ and $0.5951$. We shall not present complete justification of these experimental observations, but we shall prove explicitly some pertinent facts. Regarding the `large $\sigma$' regime, we prove that if $\sigma \geqslant \sqrt{3}/2$ then $s$ is strictly increasing and blows up at some finite time, which we bound. Regarding the `small $\sigma$' regime, we prove that if $s$ is bounded then $s$ oscillates, and we place temporal bounds on its oscillations. 

\medbreak 

Before addressing the explosive case, we find it convenient to record the first integral of a transcendent. 

\medbreak 

\begin{theorem} \label{integral}
Let $s$ satisfy \ref{PII-} with initial data $s(0) = 0$ and $\overset{\mdot}{s}(0) = \sigma > 0$. If $t \in J_s$ then 
$$\overset{\mdot}{s}(t)^2 + t s(t)^2 = \sigma^2 + s(t)^4 + \int_0^t s^2.$$
\end{theorem}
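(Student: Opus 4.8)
The plan is to differentiate the claimed identity and check that both sides agree at $t = 0$, so that the equality holds throughout $J_s$ by uniqueness of antiderivatives. Concretely, set $F(t) = \overset{\mdot}{s}(t)^2 + t s(t)^2$ and $G(t) = \sigma^2 + s(t)^4 + \int_0^t s^2$; at $t = 0$ we have $F(0) = \sigma^2 = G(0)$ since $s(0) = 0$ and $\overset{\mdot}{s}(0) = \sigma$.

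Next I would compute $\overset{\mdot}{F}$ using the product rule: $\overset{\mdot}{F} = 2 \overset{\mdot}{s}\, \overset{\mdot \mdot}{s} + s^2 + 2 t s \overset{\mdot}{s}$. Now substitute \ref{PII-}, i.e. $\overset{\mdot \mdot}{s} = 2 s^3 - t s$, into the first term: $2 \overset{\mdot}{s}(2 s^3 - t s) = 4 s^3 \overset{\mdot}{s} - 2 t s \overset{\mdot}{s}$. The terms $-2 t s \overset{\mdot}{s}$ and $+2 t s \overset{\mdot}{s}$ cancel, leaving $\overset{\mdot}{F} = 4 s^3 \overset{\mdot}{s} + s^2$. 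On the other side, $\overset{\mdot}{G} = 4 s^3 \overset{\mdot}{s} + s^2$ directly, using the fundamental theorem of calculus for the integral term. Hence $\overset{\mdot}{F} = \overset{\mdot}{G}$ on $J_s$, and combined with $F(0) = G(0)$ this gives $F \equiv G$.

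There is really no main obstacle here; this is a routine verification. The only mild care needed is to note that $s$ is twice differentiable (indeed smooth) on its maximal interval $J_s$, which is immediate from \ref{PII-} and bootstrapping, so all the differentiations above are legitimate, and that $\int_0^t s^2$ is a genuine $C^1$ function of $t$ by continuity of $s$. I would present the computation of $\overset{\mdot}{F} - \overset{\mdot}{G} = 0$ in a single displayed line with the cancellation made explicit, then invoke the initial values to conclude.
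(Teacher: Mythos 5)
Your verification is correct, and it is essentially the paper's own argument read in reverse: the paper multiplies \ref{PII-} by $2\overset{\mdot}{s}$ and integrates using the initial data, while you differentiate the claimed identity and check it at $t=0$ --- the same computation in the opposite direction. Nothing to add.
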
 

\begin{proof} 
Multiply \ref{PII-} throughout by $2 \overset{\mdot}{s}(t)$ and integrate, taking the initial data into account. 
\end{proof} 

\medbreak 

As further preparation, we make an eminently reasonable detour to examine $r(t) = 2 s(t)^2 - t$ when $t \in J_s$; note that $\overset{\mdot}{r}(t) = 4 s(t) \overset{\mdot}{s}(t) - 1$ and that 
$$\overset{\mdot \mdot}{r}(t) = 4 \overset{\mdot}{s}(t)^2 + 4 s(t) \overset{\mdot \mdot}{s}(t) = 4 \overset{\mdot}{s}(t)^2 + 2 r(t)^2 + 2 t r(t)$$
by virtue of \ref{PII-}. Glancing ahead to our proof of the next theorem, suppose there exists $0 < t_0 \in J_s$ such that $r > 0$ and $\overset{\mdot}{r} > 0$ on $J_s \cap [t_0, \infty)$: it follows from above that if $t_0 < t \in J_s$ then 
$$\overset{\mdot \mdot}{r}(t) > 2 r(t)^2$$
whence multiplication by $2 \overset{\mdot}{r}(t)$ and integration lead to 
$$\overset{\mdot}{r}(t)^2 - \overset{\mdot}{r}(t_0)^2 > \frac{4}{3} r(t)^3 - \frac{4}{3} r(t_0)^3$$
and with $K := \overset{\mdot}{r}(t_0)^2 - 4r(t_0)^3/3$ we deduce by further integration that 
$$t - t_0 < \int_{r(t_0)}^{r(t)}  \frac{{\rm d} r}{\sqrt{K + 4r^3/3}} < \int_{r(t_0)}^{\infty}  \frac{{\rm d} r}{\sqrt{K + 4r^3/3}};$$
as the value of this last integral is finite, we conclude at once that $r$ and $s$ undergo finite-time blow-up. 

\medbreak 

We now demonstrate that if $\sigma > 0$ is large enough, then $s$ blows up in finite positive time; note that positive time in the present section corresponds to negative time in the previous section.  

\medbreak

\begin{theorem} 
If $\sigma \geqslant \sqrt{3}/2$ then $s$ experiences blow-up in positive finite time. 
\end{theorem}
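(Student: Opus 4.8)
The plan is to manufacture a positive time $t_0 \in J_s$ to which the \emph{detour} immediately preceding the theorem applies --- that is, with $r(t_0) > 0$ and $\overset{\mdot}{r}(t_0) > 0$, these two inequalities then persisting on all of $J_s \cap [t_0, \infty)$ --- where, as before, $r = 2 s^2 - t$, so that $\overset{\mdot}{r} = 4 s \overset{\mdot}{s} - 1$ and $\overset{\mdot \mdot}{r} = 4 \overset{\mdot}{s}^2 + 4 s \overset{\mdot \mdot}{s} = 4 \overset{\mdot}{s}^2 + 4 r s^2$. Since $r(0) = 0$, $\overset{\mdot}{r}(0) = -1$, $s(0) = 0$ and $\overset{\mdot}{s}(0) = \sigma > 0$, the three conditions $s > 0$, $\overset{\mdot}{s} > 0$ and $r < 0$ all hold just to the right of $0$; let $T$ be the supremum of those $\tau$ for which they hold throughout $(0, \tau)$. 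The heart of the proof is to show that, when $\sigma \geqslant \sqrt{3}/2$, this interval ends strictly inside $J_s$ with $r(T) = 0$; then $t_0$ can be chosen just beyond $T$.

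On $(0, T)$ we have $\overset{\mdot \mdot}{s} = r s < 0$, so $\overset{\mdot}{s}$ is decreasing and thus $0 < \overset{\mdot}{s} \leqslant \sigma$ and $0 < s \leqslant \sigma t$; since $r \geqslant - t$ always, this gives $|\overset{\mdot \mdot}{s}| = - r s \leqslant t s \leqslant \sigma t^2$, and two integrations honouring the initial data produce
$$\overset{\mdot}{s}(t) \geqslant \sigma\big(1 - \tfrac{1}{3} t^3\big), \qquad s(t) \geqslant \sigma\big(t - \tfrac{1}{12} t^4\big) \qquad (0 < t < T).$$
For $t \leqslant 1$ both right-hand sides are positive and $s \leqslant \sigma t$ stays bounded, so $(0, T)$ cannot terminate (short of $\sup J_s$) through $s$ or $\overset{\mdot}{s}$ vanishing, nor can blow-up occur by time $1$; hence either $T > 1$, or else $T \leqslant 1$ with $T < \sup J_s$ and $r(T) = 0$. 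The first alternative is untenable: then $r < 0$ on $[0, 1]$, the displayed bounds hold at $t = 1$, and
$$r(1) = 2 s(1)^2 - 1 \geqslant 2 \sigma^2 \big(\tfrac{11}{12}\big)^2 - 1 = \tfrac{121}{72}\,\sigma^2 - 1 \geqslant \tfrac{121}{72}\cdot\tfrac{3}{4} - 1 = \tfrac{25}{96} > 0,$$
contradicting $r(1) < 0$. So $T \leqslant 1$, $r(T) = 0$, and from the bounds $s(T) > 0$ while $\overset{\mdot}{s}(T) \geqslant \sigma(1 - \tfrac{1}{3} T^3) > 0$.

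It remains to ignite the detour just past $T$. Because $r$ climbs to $0$ at $T$ we have $\overset{\mdot}{r}(T) \geqslant 0$, while $\overset{\mdot \mdot}{r}(T) = 4 \overset{\mdot}{s}(T)^2 + 4 r(T) s(T)^2 = 4 \overset{\mdot}{s}(T)^2 > 0$; so whether $\overset{\mdot}{r}(T)$ is positive or zero, there is a $t_0$ slightly larger than $T$ with $r(t_0) > 0$ and $\overset{\mdot}{r}(t_0) > 0$. On the largest subinterval of $J_s \cap [t_0, \infty)$ on which $r > 0$ we then have $\overset{\mdot \mdot}{r} = 4(\overset{\mdot}{s}^2 + r s^2) > 0$ --- the bracket cannot vanish, since $r > 0$ would force $\overset{\mdot}{s} = s = 0$, impossible by uniqueness --- so $\overset{\mdot}{r}$ stays positive, hence $r$ stays positive, and that subinterval must be the whole of $J_s \cap [t_0, \infty)$. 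The detour now applies and delivers blow-up of $s$ at a finite positive time, bounded above by $t_0 + \int_{r(t_0)}^{\infty} \frac{{\rm d}r}{\sqrt{K + 4 r^3/3}}$ with $K := \overset{\mdot}{r}(t_0)^2 - \tfrac{4}{3} r(t_0)^3$; and one notes along the way that $\overset{\mdot}{s} > 0$ everywhere on $J_s \cap [0, \infty)$, so $s$ is in fact strictly increasing there.

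The substance is all in the second paragraph: the elementary estimate $s(t) \geqslant \sigma(t - \tfrac{1}{12} t^4)$ for small positive $t$, read off at $t = 1$, is exactly what compels the graph of $s$ to reach the parabola $2 s^2 = t$ once $\sigma^2 \geqslant \tfrac{3}{4}$, which is where the hypothesis $\sigma \geqslant \sqrt{3}/2$ is spent. Extracting such a clean sufficient value (the genuine threshold is near $0.595$) is the only real obstacle; everything else is the kind of routine continuation-and-convexity bookkeeping already displayed in the detour.
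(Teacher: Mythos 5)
Your proof is correct, and it differs from the paper's at the one point where real quantitative work is needed. Both arguments share the same architecture: force the graph of $s$ across the parabola $2s^2 = t$ by time $1$ with $r > 0$ and $\overset{\mdot}{r} > 0$, show these persist, and hand off to the detour preceding the theorem; your persistence step and your appeal to the detour match the paper's two Claims in substance. The difference is how the crossing is forced. The paper evaluates the first integral of Theorem \ref{integral} to get $\overset{\mdot}{s}(t)^2 > \sigma^2 - t^2/4$, hence $s(1) > \sqrt{\sigma^2 - 1/4} \geqslant 1/\sqrt{2}$ and $\overset{\mdot}{r}(1) > 1$ directly at the fixed time $t = 1$. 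You instead work entirely inside the parabola, where $-t \leqslant r < 0$ and $0 < s \leqslant \sigma t$ give the pointwise bound $|\overset{\mdot\mdot}{s}| \leqslant \sigma t^2$, and two integrations yield $s(t) \geqslant \sigma(t - t^4/12)$; reading this off at $t = 1$ forces the crossing once $\tfrac{121}{72}\sigma^2 \geqslant 1$. This buys two things: the blow-up proof becomes independent of Theorem \ref{integral}, and it in fact establishes the conclusion for all $\sigma \geqslant 6\sqrt{2}/11 \approx 0.771$, slightly better than the stated $\sqrt{3}/2 \approx 0.866$ and a little closer to the experimental threshold $\sigma_0 \approx 0.595$. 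The price is extra bookkeeping at the crossing time $T$ (the continuation argument showing $T < \sup J_s$, and the case $\overset{\mdot}{r}(T) = 0$ handled via $\overset{\mdot\mdot}{r}(T) = 4\overset{\mdot}{s}(T)^2 > 0$), all of which you carry out correctly; the paper avoids this by obtaining strict inequalities at $t = 1$ outright. Your closing remark that $\overset{\mdot}{s} > 0$ on all of $J_s \cap [0,\infty)$ is true but needs the additional observation that $s > 0$ and $r > 0$ jointly persist past $T$ so that $\overset{\mdot\mdot}{s} = r s > 0$ there; it is not needed for the theorem.
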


\begin{proof} 
From Theorem \ref{integral} it follows at once that if $0 < t \in J_s$ then 
$$\overset{\mdot}{s}(t)^2 > s(t)^4 - t s(t)^2 + \sigma^2 = (s(t)^2 - t/2)^2 + \sigma^2 - t^2/4 \geqslant \sigma^2 - t^2/4$$
whence it follows further that if also $2 \sigma > t \in J_s$ then $\overset{\mdot}{s}(t) > 0$. Thus, if $\tau \in J_s \cap (0, 2 \sigma)$ then 
$$0 < t < \tau \Rightarrow \overset{\mdot}{s}(t) > \sqrt{\sigma^2 - \tau^2/4}$$
and so 
$$s(\tau) > \tau \sqrt{\sigma^2 - \tau^2/4}.$$

Let $\sigma \geqslant \sqrt{3}/2$; so $1 \in (0, 2 \sigma)$. 
\medbreak 
If $1 \notin J_s$ then finite-time blow-up has already taken place and we are done. 
\medbreak 
If $1 \in J_s$ then from above 
$$2 s(1)^2 > 2  (\sigma^2 - 1/4) \geqslant 2 (3/4 - 1/4) = 1$$
and 
$$\overset{\mdot}{s}(1)^2 > \sigma^2 - 1/4 \geqslant 3/4 - 1/4 = 1/2$$
so that $s(1) > 1/\sqrt{2}$ and $\overset{\mdot}{s}(1) > 1/\sqrt{2}$. Now, by unit time the graph of $s$ has already entered the region `$2 s^2 > t$' in which $s$ is concave up; the strength of this concavity then carries $s$ to blow-up in finite time, as follows. Recall that we write $r(t) = 2 s(t)^2 - t$ when $t \in J_s$. Note that $r(1) > 0$ and $\overset{\mdot}{r}(1) = 4 s(1) \overset{\mdot}{s}(1) - 1 > 1$ from above. 
\medbreak 
Claim: If $t \in J_s \cap [1, \infty)$ then $r(t) > 0$. [Deny. The infimum $\tau > 1$ of the violating times then satisfies $r(\tau) = 0$ while if $1 < t < \tau$ then $r(t) > 0$ and therefore $s(t) > 0$ continually; it follows that if $1 < t < \tau$ then $\overset{\mdot \mdot}{s}(t) = r(t) s(t) > 0$ so that $\overset{\mdot}{s}(t) > \overset{\mdot}{s}(1) > 0$  and $s(t) > s(1) > 0$ by successive integrations, whence 
$$\overset{\mdot}{r} (t) = 4 s(t) \overset{\mdot}{s}(t) - 1 >  4 s(1) \overset{\mdot}{s}(1) - 1 = \overset{\mdot}{r}(1) > 0$$
and a final integration up to time $\tau$ gives $r(\tau) > r(1) > 0$. Contradiction.] 
\medbreak 
Claim: If $t \in J_s \cap [1, \infty)$ then $\overset{\mdot}{r}(t) > 0$. [If $t \in J_s \cap (1, \infty)$ then $r(t) > 0$ by the previous Claim and $s(t) > 0$ as a consequence; thus $\overset{\mdot \mdot}{s}(t) = r(t) s(t) > 0$ and a repetition of the argument in the previous Claim (but now over the current time interval) shows that $\overset{\mdot}{r} (t) >  \overset{\mdot}{r}(1) > 0$.] 
\medbreak 
We may now invoke the result of the detour prior to the theorem in case $t_0 = 1$ to conclude that $s$ (along with $r$) undergoes finite-time blow-up. 
\end{proof}

\medbreak 

More generally, we may exhibit a simple lower bound for the time at which blow-up is possible for a strictly monotonic transcendent. 

\medbreak 

\begin{theorem} 
If $s$ is strictly increasing, then it does not blow up before time $\pi / \sqrt{4 \sigma}$. 
\end{theorem}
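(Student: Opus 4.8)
The plan is to extract from \ref{PII-} the crude but clean bound $\overset{\mdot}{s}(t)^2 \leqslant \sigma^2 + s(t)^4$, valid as long as $s$ is increasing, to turn it into a lower bound for the time required for $s$ to attain a prescribed value, and then to let that value run off to infinity, estimating the resulting elliptic integral from below by an elementary arctangent integral.

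First I would note that, because $s$ is strictly increasing and $s(0) = 0$, we have $s(t) > 0$ and $\overset{\mdot}{s}(t) \geqslant 0$ for every $t \in J_s \cap (0, \infty)$. Substituting $s(t) > 0$ and $t > 0$ into \ref{PII-} gives $\overset{\mdot \mdot}{s}(t) = 2 s(t)^3 - t s(t) < 2 s(t)^3$; multiplying by $2 \overset{\mdot}{s}(t) \geqslant 0$ and integrating from $0$ to $t$, with the initial data, then yields
$$\overset{\mdot}{s}(t)^2 \leqslant \sigma^2 + s(t)^4 .$$
(This also falls straight out of Theorem \ref{integral}: as $s$ is increasing and nonnegative, $\int_0^t s^2 \leqslant t\, s(t)^2$, so the term $\int_0^t s^2 - t s(t)^2$ in the first integral is $\leqslant 0$.) Since $\overset{\mdot}{s} \geqslant 0$, taking square roots is legitimate and gives $\overset{\mdot}{s}/\sqrt{\sigma^2 + s^4} \leqslant 1$ on $J_s \cap (0, \infty)$; integrating and making the substitution $s = s(t)$ in the integral, we obtain
$$\int_0^{s(t)} \frac{{\rm d}s}{\sqrt{\sigma^2 + s^4}} \leqslant t$$
whenever $t \in J_s \cap [0, \infty)$.

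Next, suppose $s$ blows up at a finite time $T > 0$; being increasing, it must do so through $s(t) \to \infty$ as $t \uparrow T$. Letting $t \uparrow T$ above and then substituting $s = \sqrt{\sigma}\, u$ gives
$$T \geqslant \int_0^{\infty} \frac{{\rm d}s}{\sqrt{\sigma^2 + s^4}} = \frac{1}{\sqrt{\sigma}} \int_0^{\infty} \frac{{\rm d}u}{\sqrt{1 + u^4}} .$$
Finally, from $1 + u^4 \leqslant 1 + 2 u^2 + u^4 = (1 + u^2)^2$ we get $1/\sqrt{1 + u^4} \geqslant 1/(1 + u^2)$, hence
$$\int_0^{\infty} \frac{{\rm d}u}{\sqrt{1 + u^4}} \geqslant \int_0^{\infty} \frac{{\rm d}u}{1 + u^2} = \frac{\pi}{2} ,$$
so $T \geqslant \pi/(2\sqrt{\sigma}) = \pi/\sqrt{4\sigma}$; and if $s$ never blows up there is nothing to prove.

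I do not anticipate a real obstacle. The only places asking for a moment's care are the use of $\overset{\mdot}{s} \geqslant 0$ to take square roots and to preserve the inequality direction on multiplying by $2\overset{\mdot}{s}$ (both immediate from strict monotonicity, with $s(0) = 0$ supplying the positivity of $s$ in positive time), and the routine fact that a strictly increasing transcendent defined on a bounded interval must escape to $+\infty$ there. The rest is just the elementary inequality $1 + u^4 \leqslant (1 + u^2)^2$ and the elliptic-integral comparison it yields.
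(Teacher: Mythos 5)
Your proof is correct and follows essentially the same route as the paper: both derive $\overset{\mdot}{s}(t)^2 \leqslant \sigma^2 + s(t)^4$ from monotonicity (via the first integral or the equivalent direct integration), and your inequality $1 + u^4 \leqslant (1+u^2)^2$ is exactly the paper's $\sigma^2 + s^4 < (\sigma + s^2)^2$ after the rescaling $s = \sqrt{\sigma}\,u$. The only cosmetic difference is that the paper phrases the conclusion as the pointwise bound $s(t) < \sqrt{\sigma}\tan(\sqrt{\sigma}\,t)$, whereas you pass to the limit at the putative blow-up time; the content is identical.
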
 

\begin{proof} 
If $0 < t \in J_s$ then $\int_0^t s^2 < t s(t)^2$ by strict monotonicity, so Theorem \ref{integral} yields
$$\overset{\mdot}{s}(t)^2 < \sigma^2 + s(t)^4$$
whence integration from $0$ to $t$ then yields 
$$t > \int_0^{s(t)} \frac{{\rm d} s}{\sqrt{\sigma^2 + s^4 }}$$
which with 
$$\sigma^2 + s^4  < (\sigma + s^2)^2$$
implies that
$$t > \int_0^{s(t)} \frac{{\rm d} s}{\sigma + s^2} = \frac{1}{\sqrt{\sigma}}\arctan \Big(\frac{s(t)}{\sqrt{\sigma}}\Big)$$
so that finally, if $0 < t \in J_s$ then 
$$s(t) < \sqrt{\sigma} \tan (\sqrt{\sigma} t).$$
The dominating side is finite when $\sqrt{\sigma} t < \pi/2$ so we are done. 
\end{proof} 

\medbreak 

Graphical experimentation here indicates that finite-time blow-up is always associated with strict monotonicity; but we shall not offer a proof of this. 

\medbreak 

So much for the explosive case; now for the oscillatory case. 

\medbreak 

Let $s$ satisfy $\overset{\mdot \mdot}{s}(t) = 2 s(t)^3 - t s(t)$ whenever $t \geqslant 0$ and let it vanish at the consecutive points $b > a > 0$. We may place a lower bound on the separation $b - a$ by `Sturm comparison' with Airy functions. By an {\it Airy function} we shall mean a function $g$ satisfying the differential equation $\overset{\mdot \mdot}{g} (t) + t g(t) = 0$ for all $t$ in its interval of definition and in particular for all $t \geqslant 0$. Such functions have been thoroughly studied; they are oscillatory on the positive half-line, with zeros that are well understood. 

\medbreak 

\begin{theorem} \label{lower}
If $a < b$ in $[0, \infty)$ are consecutive zeros of the transcendent $s$  then the interval $(a, b)$ contains a zero of each Airy function. 
\end{theorem}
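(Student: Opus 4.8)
The plan is to run a Sturm-type comparison between the transcendent $s$ and an arbitrary Airy function $g$. First I would put both equations into the form $\overset{\mdot \mdot}{u} + c(t) u = 0$: the transcendent satisfies $\overset{\mdot \mdot}{s} + q(t) s = 0$ with $q(t) = t - 2 s(t)^2$, while an Airy function satisfies $\overset{\mdot \mdot}{g} + p(t) g = 0$ with $p(t) = t$. The feature that makes the comparison work is that $q(t) \leqslant t = p(t)$ for every $t$, with equality precisely at the zeros of $s$; in particular $q < p$ throughout the open interval $(a,b)$, on which $s$ is sign-definite because $a$ and $b$ are consecutive zeros.

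Next I would normalise signs. Replacing $s$ by $-s$ (still a solution of \ref{PII-}) I may assume $s > 0$ on $(a,b)$; arguing by contradiction, I suppose that $g$ has no zero in $(a,b)$ and, replacing $g$ by $-g$ (still an Airy function), I may assume $g > 0$ on $(a,b)$, so that $g(a) \geqslant 0$ and $g(b) \geqslant 0$ by continuity. I would then introduce the Wronskian-type quantity $W = s \overset{\mdot}{g} - \overset{\mdot}{s} g$ and compute, using the two differential equations,
$$\overset{\mdot}{W} = s \overset{\mdot \mdot}{g} - \overset{\mdot \mdot}{s} g = - t s g - (2 s^2 - t) s g = - 2 s(t)^3 g(t).$$
On $(a,b)$ the right-hand side is strictly negative, so $W$ is strictly decreasing there; in particular $W(b) < W(a)$.

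Now I would read off the endpoint values. Since $s > 0$ on $(a,b)$ and $s(a) = 0$ we have $\overset{\mdot}{s}(a) \geqslant 0$, and in fact $\overset{\mdot}{s}(a) > 0$, for otherwise $s$ would solve \ref{PII-} with $s(a) = \overset{\mdot}{s}(a) = 0$ and so vanish identically by uniqueness of the initial value problem; likewise $\overset{\mdot}{s}(b) < 0$. Hence $W(a) = - \overset{\mdot}{s}(a) g(a) \leqslant 0$ while $W(b) = - \overset{\mdot}{s}(b) g(b) \geqslant 0$, which together with $W(b) < W(a)$ yields $0 \leqslant W(b) < W(a) \leqslant 0$, a contradiction. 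Therefore $g$ must vanish somewhere in $(a,b)$, and since $g$ was an arbitrary Airy function the theorem follows.

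I do not expect a genuine obstacle here: the content is the classical Sturm comparison argument, and the only steps that need care are the two sign normalisations, the appeal to uniqueness to upgrade $\overset{\mdot}{s}(a) \geqslant 0$ and $\overset{\mdot}{s}(b) \leqslant 0$ to strict inequalities, and checking that $\overset{\mdot}{W} < 0$ holds strictly across the whole open interval (which it does, since $s$ and $g$ are both strictly positive there). As a small bonus, the monotonicity of $W$ also automatically rules out the degenerate possibility that $a$ and $b$ are simultaneously consecutive zeros of $s$ and of $g$.
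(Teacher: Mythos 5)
Your proof is correct and is essentially the paper's own argument: both hinge on the Wronskian identity $(\overset{\mdot}{s} g - s \overset{\mdot}{g})^{\mdot} = 2 s^3 g$ (yours is the same identity with the opposite sign convention) and derive a sign contradiction at the endpoints, you by monotonicity of $W$ and the paper by integrating over $(a,b)$, which is the same computation. The only cosmetic differences are your sign normalisations in place of the paper's case analysis and your (correct but unnecessary) appeal to uniqueness to make $\overset{\mdot}{s}(a)$ and $\overset{\mdot}{s}(b)$ strictly signed.
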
 

\begin{proof} 
Let $g$ be an Airy function. By direct calculation, 
$$(\overset{\mdot}{s} g - s \overset{\mdot}{g})^{\mdot} = 2 s^3 g$$
so that 
$$\int_a^b 2 s^3 g = [\overset{\mdot}{s} g - s \overset{\mdot}{g}]_a^b = \overset{\mdot}{s}(b) g(b) -  \overset{\mdot}{s}(a) g(a)$$
since $s(a) = 0 = s(b)$. Assume that $s > 0$ on $(a, b)$ whence $\overset{\mdot}{s}(a) \geqslant 0 \geqslant \overset{\mdot}{s}(b)$. Aim at a contradiction by supposing that $g$ is never zero on $(a, b)$. If $g > 0$ throughout $(a, b)$ then the integral on the left is strictly positive whereas the right side is non-positive since $g(a) \geqslant 0$ and $g(b) \geqslant 0$; if $g < 0$ throughout $(a, b)$ then the integral on the left is strictly negative whereas the right side is non-negative since $g(a) \leqslant 0$ and $g(b) \leqslant 0$. As each alternative leads to a contradiction, $g$ must vanish at some point of $(a, b)$. The assumption that $s < 0$ on $(a, b)$ is met by a similar argument with corresponding changes of sign. 
\end{proof} 

\medbreak 

This result applies to any solution $s$ of \ref{PII-} that is known to be oscillatory (or at least to have positive zeros); the proof needs no hypotheses regarding initial data. The next result establishes that a solution $s$ of \ref{PII-} is indeed oscillatory provided that it is merely bounded on $[0, \infty)$; as before, the proof proceeds by `Sturm comparison' but with modified Airy functions. Again, this proof does not explicitly call for hypotheses regarding initial data; of course, such hypotheses are implicitly involved in the boundedness of $s$.   

\medbreak 

To prepare this result, fix $\lambda > 0$ and let $g$ satisfy the modified Airy equation $\overset{\mdot \mdot}{g}(t) + \lambda t g(t) = 0$. Let us agree to call such $g$ a {\it $\lambda$-Airy function}; equivalently, $t \mapsto g(\lambda^{-1/3} \: t)$ is a true Airy function. Note that the oscillations of $g$ become more rapid as $\lambda$ increases. Let $s$ be a transcendent on $[0, \infty)$. By direct calculation, if $t > 0$ then 
$$(\overset{\mdot}{s} g - s \overset{\mdot}{g})^{\mdot}(t) = \{2 s(t)^2 + (\lambda - 1) t \} s(t) g(t)$$
so that if $0 < a < b$ then 
$$\int_a^b  \{2 s(t)^2 + (\lambda - 1) t \} s(t) g(t) {\rm d}t = [\overset{\mdot}{s} g - s \overset{\mdot}{g}]_a^b;$$
in particular,  if $a$ and $b$ are also zeros of $g$ then 
$$\int_a^b  \{2 s(t)^2 + (\lambda - 1) t \} s(t) g(t) {\rm d}t = s(a) \overset{\mdot}{g}(a) - s(b) \overset{\mdot}{g}(b).$$
The key step in the proof of the next result is to engineer a contradiction by arranging that $\{2 s(t)^2 + (\lambda - 1) t \}$ is strictly negative whenever $a < t < b$. 

\medbreak 

\begin{theorem} \label{upper}
If $s^2 \leqslant M$ on $[0, \infty)$ and $T > 2 M$ then the transcendent $s$ has a zero between any consecutive zeros  in $[T, \infty)$ of each $\lambda$-Airy function with $\lambda = 1 - 2 M /T$.
\end{theorem}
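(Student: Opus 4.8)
The plan is to run the Sturm-type comparison set up in the paragraph preceding the statement, exploiting the fact that the constant $\lambda = 1 - 2M/T$ has been rigged precisely so that the coefficient $2 s(t)^2 + (\lambda - 1) t$ occurring there is negative once $t \geqslant T$. First I would record the two consequences of the hypothesis $T > 2M > 0$: that $0 < \lambda < 1$, so that $\lambda$-Airy functions are genuinely oscillatory and ``consecutive zeros in $[T, \infty)$'' is meaningful; and that $(1 - \lambda) T = 2M$ exactly. Combining the latter with the bound $2 s(t)^2 \leqslant 2M$ shows that whenever $t > T$
$$2 s(t)^2 + (\lambda - 1) t \leqslant 2M - (1 - \lambda) t < 2M - (1 - \lambda) T = 0,$$
so this coefficient is strictly negative on $(T, \infty)$.

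Next, let $g$ be a $\lambda$-Airy function and let $a < b$ be consecutive zeros of $g$ with $a, b \in [T, \infty)$. Integrating the identity $(\overset{\mdot}{s} g - s \overset{\mdot}{g})^{\mdot} = \{2 s^2 + (\lambda - 1) t\} s g$ (derived just before the theorem) over $[a, b]$ and using $g(a) = g(b) = 0$ gives
$$\int_a^b \{2 s(t)^2 + (\lambda - 1) t\} s(t) g(t)\, {\rm d}t = s(a) \overset{\mdot}{g}(a) - s(b) \overset{\mdot}{g}(b).$$
I would then argue by contradiction: suppose $s$ has no zero in $(a, b)$. Then $s$ has one sign there, and — exactly as in the proof of Theorem \ref{lower}, after replacing $s$ by $-s$ if necessary (which again solves \ref{PII-} with the same bound and the same zero set) — I may assume $s > 0$ on $(a, b)$; replacing $g$ by $-g$ if necessary, I may likewise assume $g > 0$ on $(a, b)$. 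Every $t \in (a, b)$ satisfies $t > a \geqslant T$, so by the display above the bracketed coefficient is strictly negative on all of $(a, b)$; hence the integrand is strictly negative there and the left-hand side is strictly negative. On the other hand $s(a), s(b) \geqslant 0$ by continuity, while $\overset{\mdot}{g}(a) > 0$ and $\overset{\mdot}{g}(b) < 0$ because $g$ is a non-trivial solution of the linear equation $\overset{\mdot \mdot}{g} + \lambda t g = 0$ that vanishes at $a$ and at $b$ and is positive between them; hence the right-hand side is non-negative. This contradiction forces $s$ to vanish somewhere in $(a, b)$.

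I do not anticipate a serious obstacle; the argument is essentially the comparison of Theorem \ref{lower}, carried out with modified Airy functions in place of true ones. The two points that want a little attention are ensuring the coefficient inequality is \emph{strict} throughout the open interval $(a, b)$ — which is exactly where the strict hypothesis $T > 2M$ is used, via $t > a \geqslant T$ — and pinning down the endpoint signs, in particular that $\overset{\mdot}{g}(a)$ and $\overset{\mdot}{g}(b)$ are non-zero (so that the right-hand side is unambiguously non-negative rather than of indeterminate sign), which follows from uniqueness for the linear equation satisfied by $g$.
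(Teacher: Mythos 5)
Your proposal is correct and follows essentially the same route as the paper: the same coefficient estimate $2s(t)^2 + (\lambda-1)t < 0$ for $t > T$, the same integrated Wronskian identity, and the same sign contradiction at the endpoints. The only cosmetic difference is that you reduce to the case $s>0$, $g>0$ by symmetry where the paper enumerates the sign cases directly.
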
 

\begin{proof} 
Note that if $t > T$  then $(1 - \lambda) t > (1 - \lambda) T = 2 M \geqslant 2 s(t)^2$ so that  $\{2 s(t)^2 + (\lambda - 1) t \} < 0$. Now let $g$ satisfy $\overset{\mdot \mdot}{g}(t) + \lambda t g(t) = 0$ and have $a < b$ as consecutive zeros in $[T, \infty)$. Assume that $g > 0$ on $(a, b)$; it follows that $\overset{\mdot}{g}(a) > 0 > \overset{\mdot}{g}(b)$. For a contradiction, suppose that $s$ is never zero on $(a, b)$: if $s > 0$ throughout $(a, b)$ then the integral displayed just prior to the theorem is strictly negative but its purported value $s(a) \overset{\mdot}{g}(a) - s(b) \overset{\mdot}{g}(b)$ is non-negative; if $s < 0$ throughout $(a, b)$ then the integral displayed just prior to the theorem is strictly positive but $s(a) \overset{\mdot}{g}(a) - s(b) \overset{\mdot}{g}(b)$ is not. The assumption that $g > 0$ on $(a, b)$ thus forces $s$ to vanish at some point of $(a, b)$; of course, the assumption that $g < 0$ forces the same conclusion in like manner. 
\end{proof} 

This theorem not only ensures that a bounded transcendent $s$ is oscillatory: it also places upper bounds on the separation of its consecutive zeros. Better still, the upper bounds of Theorem \ref{upper} approach the lower bounds of Theorem \ref{lower} as $T \uparrow \infty$ for then $\lambda \uparrow 1$ and $\lambda$-Airy functions approach Airy functions proper.  Incidentally, the fact that these bounds take effect only after an initial delay is nicely borne out by experimental observations on the graph of $s$: its initial arch hugs the parabola `$2 s^2 = t$' more closely and more extensively as $\sigma > 0$ increases towards $\sigma_0$. 

\medbreak

\section*{Remarks} 

\medbreak 

As we mentioned in our Introduction, the Painlev\'e equation \ref{PII} is best viewed from a complex perspective. From this perspective, the blow-up experienced by $s$ at positive time $t_{\infty}$ is a simple pole, as is that experienced at negative time when $\overset{\mdot}{s}(0)$ is large enough; indeed, Painlev\'e II transcendents quite generally are meromorphic in the whole complex plane, with simple poles as their singularities. We recommend [2] for a wide-ranging introduction to these matters in the context of complex ordinary differential equations generally and recommend [1] for further references and a wealth of detail regarding all six Painlev\'e equations in particular. 

\medbreak 

The graphical experiments and numerical estimates mentioned in this paper were conducted using the freeware program WZGrapher developed by Walter Zorn.

\bigbreak

\begin{center} 
{\small R}{\footnotesize EFERENCES}
\end{center} 
\medbreak

[1] V.I. Gromak, I. Laine and S. Shimomura, {\it Painlev\'e Differential Equations in the Complex Plane}, de Gruyter (2002). 

\medbreak 

[2] E. Hille, {\it Ordinary Differential Equations in the Complex Domain}, Wiley-Interscience (1976); Dover Publications (1997).

\medbreak 

[3] P.L. Robinson, {\it The Triple-Zero Painlev\'e I Transcendent}, arXiv 1607.07088 (2016). 

\medbreak

\end{document}